\documentclass[12pt,reqno]{amsart}

\usepackage[utf8]{inputenc}
\usepackage[T1]{fontenc}
\usepackage{lmodern}
\usepackage{amsmath,amssymb,amsthm,mathtools}
\usepackage{microtype}
\usepackage[hidelinks]{hyperref}
\hypersetup{
  pdftitle={A sharp Gaussian harmonic-mean inequality for Neumann eigenvalues of the Ornstein--Uhlenbeck operator},
  pdfauthor={Francesco Chiacchio},
  pdfsubject={Sharp reciprocal-sum inequality for Gaussian Neumann eigenvalues},
  pdfkeywords={Ornstein--Uhlenbeck operator, Hermite operator, Neumann eigenvalues, Gaussian measure, harmonic mean, isoperimetric inequality}
}

\allowdisplaybreaks
\numberwithin{equation}{section}

\newtheorem{theorem}{Theorem}[section]
\newtheorem{lemma}[theorem]{Lemma}
\newtheorem{proposition}[theorem]{Proposition}

\theoremstyle{remark}
\newtheorem{remark}[theorem]{Remark}

\newcommand{\R}{\mathbb{R}}
\newcommand{\Sph}{\mathbb{S}}
\newcommand{\dd}{\,\mathrm{d}}
\newcommand{\tr}{\operatorname{tr}}
\newcommand{\Id}{I_N}
\newcommand{\gammaN}{\gamma_N}
\newcommand{\admissible}{\mathcal{A}_N}

\title[Gaussian harmonic-mean inequality]{A sharp Gaussian harmonic-mean inequality for Neumann eigenvalues of the Ornstein--Uhlenbeck operator}

\author{Francesco Chiacchio}
\address{Dipartimento di Matematica e Applicazioni ``R. Caccioppoli'', Universit\`a degli Studi di Napoli Federico II, Complesso Universitario di Monte Sant'Angelo, Via Cintia, 80126 Napoli, Italy}
\email{francesco.chiacchio@unina.it}

\subjclass[2020]{Primary 35P15; Secondary 35J70, 49R05}
\keywords{Ornstein--Uhlenbeck operator, Hermite operator, Neumann eigenvalues, Gaussian measure, harmonic mean, isoperimetric inequality}
\date{}

\begin{document}

\begin{abstract}
Let $N\geq2$ and let $\Omega\subset\R^N$ be a connected Lipschitz domain, possibly unbounded, symmetric with respect to the origin, and such that $0<\gammaN(\Omega)<1$. We assume that the Gaussian Sobolev embedding $H^1(\Omega,\gammaN)\hookrightarrow L^2(\Omega,\gammaN)$ is compact; a sufficient condition is the existence of a bounded Gaussian Sobolev extension operator from $\Omega$ to $\R^N$. Denote by
\[
0=\mu_0(\Omega)<\mu_1(\Omega)\leq\mu_2(\Omega)\leq\cdots
\]
the Neumann eigenvalues of the positive Ornstein--Uhlenbeck operator $-\Delta+x\cdot\nabla$ in $\Omega$. We prove the sharp reciprocal-sum inequality
\[
\sum_{k=1}^{N}\frac{1}{\mu_k(\Omega)}
\geq \frac{N}{\mu_1(B_R)},
\]
where $B_R$ is the Euclidean ball centred at the origin and satisfying $\gammaN(B_R)=\gammaN(\Omega)$. Equality holds if and only if $\Omega=B_R$. The proof combines a coupled $N$-dimensional Ritz argument with a Gaussian raywise rearrangement. The angular imbalance is encoded by a symmetric trace-free matrix, whose contribution is controlled by a finite-dimensional convexity inequality.
\end{abstract}

\maketitle

\section{Introduction}

Let
\begin{equation}\label{eq:gaussian-measure}
\dd\gammaN(x)=(2\pi)^{-N/2}e^{-|x|^2/2}\dd x
\end{equation}
be the standard Gaussian measure on $\R^N$. For an open connected set $\Omega\subset\R^N$, we use the weighted Sobolev space
\[
H^1(\Omega,\gammaN)
=\left\{u\in W^{1,1}_{\mathrm{loc}}(\Omega):
 u,|\nabla u|\in L^2(\Omega,\gammaN)\right\}.
\]

For possibly unbounded domains, Lipschitz regularity and the condition
$0<\gammaN(\Omega)<1$ do not by themselves ensure the compactness needed for
spectral theory. We therefore denote by $\admissible$ the class of connected
Lipschitz domains $\Omega\subset\R^N$ such that
\begin{equation}\label{eq:compact-embedding}
0<\gammaN(\Omega)<1,
\qquad
H^1(\Omega,\gammaN)\hookrightarrow L^2(\Omega,\gammaN)
\quad\text{compactly}.
\end{equation}
A convenient sufficient condition for \eqref{eq:compact-embedding} is the
existence of a bounded linear extension operator
\begin{equation}\label{eq:extension-operator}
\mathcal E_\Omega:H^1(\Omega,\gammaN)
\longrightarrow H^1(\R^N,\gammaN),
\qquad
(\mathcal E_\Omega u)|_\Omega=u,
\end{equation}
with
\[
\|\mathcal E_\Omega u\|_{H^1(\R^N,\gammaN)}
\leq C_\Omega\|u\|_{H^1(\Omega,\gammaN)}.
\]
Indeed, the embedding $H^1(\R^N,\gammaN)\hookrightarrow
L^2(\R^N,\gammaN)$ is compact, and composition with the extension and
restriction operators yields \eqref{eq:compact-embedding}
see 
\cite[Definition~2.1 and Remark~2.1]{ChiacchioGavitone}. In particular, $\admissible$ contains all bounded
Lipschitz domains, all convex domains (possibly unbounded), and all Lipschitz
domains for which a Gaussian Sobolev extension theorem of the form
\eqref{eq:extension-operator} holds; see again
\cite[Remark~2.1]{ChiacchioGavitone}.

On a domain $\Omega\in\admissible$ we consider the Neumann problem
\begin{equation}\label{eq:eigenproblem}
\begin{cases}
-\Delta u+x\cdot\nabla u=\mu u & \text{in }\Omega,\\[2mm]
\dfrac{\partial u}{\partial\nu}=0 & \text{on }\partial\Omega.
\end{cases}
\end{equation}
Its quadratic form is
\[
\mathcal E_\Omega(u,v)=\int_\Omega \nabla u\cdot\nabla v\,\dd\gammaN,
\qquad u,v\in H^1(\Omega,\gammaN).
\]
By \eqref{eq:compact-embedding}, the Neumann problem \eqref{eq:eigenproblem}
has a discrete sequence of eigenvalues. Repeating them according to
multiplicity, we write
\[
0=\mu_0(\Omega)<\mu_1(\Omega)\leq\mu_2(\Omega)\leq\cdots\nearrow+\infty.
\]
The operator in \eqref{eq:eigenproblem} is the positive Ornstein--Uhlenbeck operator. On the whole space its eigenfunctions are the Hermite polynomials, which explains the frequently used terminology \emph{Hermite operator}; see, for instance, \cite{ChiacchioDiBlasio}. Under the Gaussian ground-state transform it is unitarily equivalent to the shifted harmonic oscillator $-\Delta+|x|^2/4-N/2$, although the Neumann condition in \eqref{eq:eigenproblem} becomes a Robin condition for the transformed function.

The classical point of departure is the Szeg\H{o}--Weinberger theorem. If $\Omega\subset\R^N$ is a bounded domain and $B\subset\R^N$ is a ball with $|B|=|\Omega|$, then
\begin{equation}\label{eq:szego-weinberger}
\mu_1(\Omega)\leq\mu_1(B),
\end{equation}
with equality only for balls \cite{Szego,Weinberger}. Szeg\H{o}'s conformal argument in dimension two gives more: for a simply connected planar domain and the disk $D$ of the same area,
\begin{equation}\label{eq:szego-two-term}
\frac{1}{\mu_1(\Omega)}+\frac{1}{\mu_2(\Omega)}
\geq\frac{2}{\mu_1(D)}.
\end{equation}
This observation naturally leads from the optimisation of one eigenvalue to the harmonic mean of the whole first nontrivial eigenspace of the ball.

Ashbaugh and Benguria initiated the systematic study of these reciprocal sums in \cite{AshbaughBenguria}. In the planar case they proved, for arbitrary bounded domains, the universal estimate
\begin{equation}\label{eq:AB-planar}
\frac{1}{\mu_1(\Omega)}+\frac{1}{\mu_2(\Omega)}
\geq\frac{|\Omega|}{2\pi},
\end{equation}
and obtained further sharp conclusions under additional symmetry assumptions. In arbitrary dimensions they established the scale-sharp, but non-optimal, lower bound
\begin{equation}\label{eq:AB-universal}
\sum_{k=1}^{N}\frac{1}{\mu_k(\Omega)}
\geq
\frac{N}{N+2}
\left(\frac{|\Omega|}{\omega_N}\right)^{2/N},
\end{equation}
where $\omega_N$ is the volume of the Euclidean unit ball. They also formulated the sharp conjecture
\begin{equation}\label{eq:AB-conjecture}
\sum_{k=1}^{N}\frac{1}{\mu_k(\Omega)}
\geq\frac{N}{\mu_1(B)},
\qquad |B|=|\Omega|,
\end{equation}
with equality only for balls. The conjecture was subsequently emphasised in Ashbaugh's list of open problems and in Henrot's monograph \cite{AshbaughOpenProblems,Henrot}. For broader accounts of spectral shape optimisation and related extremal problems, we also refer to the survey volume edited by Henrot \cite{HenrotEditedVolume}.

A  partial result was obtained by Xia and Wang, who proved the sharp $(N-1)$-term inequality
\begin{equation}\label{eq:xia-wang-euclidean}
\sum_{k=1}^{N-1}\frac{1}{\mu_k(\Omega)}
\geq\frac{N-1}{\mu_1(B)}
\end{equation}
for Euclidean domains, together with the corresponding hyperbolic result \cite{XiaWang}. The missing last reciprocal term was recovered only recently by He, Li and Tang \cite{HeLiTang}. Their main innovation is to keep the entire $N$-dimensional transplanted eigenspace coupled: instead of estimating the trial functions separately, they estimate the trace of the inverse stiffness matrix times the mass matrix. This leads to a symmetric trace-free defect matrix and a finite-dimensional convexity argument that proves the full conjecture \eqref{eq:AB-conjecture}.

The same sequence of questions has been pursued on spaces of constant curvature. Ashbaugh and Benguria proved the sharp Szeg\H{o}--Weinberger inequality for the first nonzero Neumann eigenvalue on hyperbolic space and, under a hemisphere-containment assumption, on the sphere \cite{AshbaughBenguriaSpaceForms}. For domains contained in a hemisphere of $\Sph^N$, Benguria, Brandolini and the author established the sharp harmonic-mean inequality for the first $N-1$ nontrivial eigenvalues of the Neumann Laplace--Beltrami operator \cite{BenguriaBrandoliniChiacchio}. Xia and Wang obtained the analogous $(N-1)$-term result in hyperbolic space and formulated the full space-form conjecture \cite{XiaWang}. Very recently, You and Zhang proved that conjecture: if $\Omega$ is a smooth bounded domain in a simply connected space form of curvature $-1$ or $1$ (with $\Omega$ contained in an open hemisphere in the spherical case), then
\begin{equation}\label{eq:you-zhang}
\sum_{k=1}^{N}\frac{1}{\mu_k(\Omega)}
\geq\frac{N}{\mu_1(B_\Omega)},
\end{equation}
where $B_\Omega$ is a geodesic ball of the same volume, and equality characterises geodesic balls \cite{YouZhang}. Thus the coupled matrix method now closes the Euclidean, spherical and hyperbolic reciprocal-sum problems.

Complementary results for the first positive Neumann eigenvalue show that the geometry of the spherical problem is subtler outside the hemisphere regime. Langford and Laugesen proved that a geodesic cap maximises $\mu_1$ among simply connected domains occupying up to $94\%$ of $\Sph^2$, thereby extending cap optimality far beyond the hemisphere \cite{LangfordLaugesen}. On the other hand, Bucur, Laugesen, Martinet and Nahon constructed open subsets of $\Sph^2$ of sufficiently large area whose first positive Neumann eigenvalue is strictly larger than that of the equimeasurable geodesic cap \cite{BucurLaugesenMartinetNahon}. Thus cap optimality may persist beyond a hemisphere under a topological restriction, whereas unrestricted spherical Szeg\H{o}--Weinberger statements require genuine geometric or topological hypotheses. These results concern $\mu_1$, rather than the reciprocal sum in \eqref{eq:you-zhang}, but they clarify the role of the assumptions in the spherical theory.

The Gaussian setting is closely related to spherical geometry, but its Neumann optimisation problem displays a strikingly different behaviour. The standard Gaussian measure can be obtained as the Poincar\'e limit of normalised surface measures on high-dimensional spheres. Under the same limiting procedure, spherical caps converge to half-spaces, which are the isoperimetric sets for Gaussian measure. Half-spaces also minimise the first Gaussian Dirichlet eigenvalue. It is therefore tempting to expect the Gaussian isoperimetric set to maximise the first nonzero Neumann eigenvalue, as happens in the classical Euclidean and spherical Szeg\H{o}--Weinberger problems. As already stressed by the author and Di Blasio in \cite{ChiacchioDiBlasio}, this expectation is false.

Indeed, in one dimension, the author and Di Blasio in \cite{ChiacchioDiBlasio}, and Brock, the author and Di Blasio in \cite{BrockChiacchioDiBlasio}, proved by different methods that, among intervals of prescribed Gaussian measure, the first nontrivial Neumann eigenvalue is minimised by a half-line and maximised by the interval centred at the origin; moreover, the eigenvalue varies strictly monotonically as the interval moves between these two configurations. In higher dimensions, the author and Di Blasio showed that, among origin-symmetric domains of prescribed Gaussian measure, the Euclidean ball centred at the origin uniquely maximises $\mu_1$ \cite[Theorem~4.1]{ChiacchioDiBlasio}. The symmetry restriction does not conceal a possible optimality of half-spaces in the unrestricted problem: every Gaussian half-space has first nonzero Neumann eigenvalue equal to $1$, independently of its measure, whereas a suitable smoothing of an explicitly chosen non-symmetric square has first eigenvalue strictly larger than $1$ \cite[Remark~4.3]{ChiacchioDiBlasio}. Thus, even after origin symmetry is removed, half-spaces do not maximise $\mu_1$. On the complementary lower-bound side, Brandolini, the author, Krej\v{c}i\v{r}\'ik and Trombetti proved in the planar case, under containment in a strip, that equality in $\mu_1(\Omega)\geq1$ characterises strips, and also obtained thin-domain convergence results for eigenvalues and eigenfunctions \cite{BrandoliniChiacchioKrejcirikTrombetti}. Beck and Jerison later removed the strip-containment and dimensional restrictions: for every convex domain $K\subset\R^N$, equality holds if and only if, up to rotation, $K=\R\times K'$ for a convex domain $K'\subset\R^{N-1}$; their optimal-transport proof does not contain the thin-domain eigenfunction convergence analysis of the former work \cite[Theorem~I.7]{BeckJerison}.

The lower-order Gaussian problem was advanced by Gao and Wang. In \cite{GaoWang}, they proved that if $\Omega$ is an origin-symmetric Lipschitz domain and $B_R$ is the centred ball satisfying $\gammaN(B_R)=\gammaN(\Omega)$, then
\begin{equation}\label{eq:gao-wang}
\sum_{k=1}^{N-1}\frac{1}{\mu_k(\Omega)}
\geq\frac{N-1}{\mu_1(B_R)},
\end{equation}
with equality only for $B_R$. Their theorem is the Gaussian counterpart of the $(N-1)$-term estimates of Xia and Wang and of Benguria, Brandolini and the author.

The purpose of the present paper is to complete the Gaussian picture by proving the full $N$-term inequality. Although the finite-dimensional mechanism is inspired by He, Li and Tang and is closely related to the subsequent space-form argument of You and Zhang, the Gaussian problem is not a formal consequence of either result. The operator contains a drift term, the measure is not invariant under translations, the radial volume variable has finite total mass, and a translated Weinberger centre would destroy the radial structure of the weight. Origin symmetry is therefore a genuine structural hypothesis: it makes all $N$ transplanted coordinate functions simultaneously orthogonal to the constants. Once this admissibility issue is resolved, the weighted raywise rearrangement produces the same trace-free angular defect that appears in the Euclidean and space-form proofs.

Our main result is the following.

\begin{theorem}\label{thm:main}
Let $N\geq2$, and let $\Omega\in\admissible$ satisfy
\[
\Omega=-\Omega.
\]
Let $B_R$ be the ball centred at the origin such that
\[
\gammaN(B_R)=\gammaN(\Omega).
\]
Then
\begin{equation}\label{eq:main}
\sum_{k=1}^{N}\frac{1}{\mu_k(\Omega)}
\geq \frac{N}{\mu_1(B_R)}.
\end{equation}
Equality holds if and only if $\Omega=B_R$.
\end{theorem}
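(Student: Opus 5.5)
We follow the coupled Ritz scheme of He, Li and Tang, adapted to the Gaussian weight. Let $g$ be the radial profile of the first nontrivial Neumann eigenfunctions of $B_R$. These eigenfunctions are $g(r)x_i/r$, where $g$ solves
\[
g''+\Bigl(\tfrac{N-1}{r}-r\Bigr)g'-\tfrac{N-1}{r^2}g=-\mu_1(B_R)g,
\qquad g(0)=0,\quad g'(R)=0,
\]
and $g$ is increasing on $[0,R]$. We extend $g$ to $[0,\infty)$ by $g(r)=g(R)$ for $r\geq R$, and set $G(r)=\frac{g(r)}{r}$ and $P_i(x)=G(|x|)x_i$. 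Since $\Omega=-\Omega$, each $P_i$ is odd, so $\int_\Omega P_i\,d\gammaN=0$. Moreover, $P_i\in H^1(\Omega,\gammaN)$ because $g$ is bounded and Lipschitz. No Weinberger centring is needed.

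Let $u_1,\dots,u_N$ be $L^2(\Omega,\gammaN)$-orthonormal eigenfunctions for $\mu_1,\dots,\mu_N$, and let $V=\mathrm{span}\{P_i\}$. Define the mass matrix $M_{ij}=\int_\Omega P_iP_j\,d\gammaN$ and the stiffness matrix $S_{ij}=\int_\Omega\nabla P_i\cdot\nabla P_j\,d\gammaN$. The trace (Ky Fan/Poincaré) form of the min–max principle on the orthogonal complement of constants gives
\[
\sum_{k=1}^N\frac1{\mu_k}\ \geq\ \tr\bigl(S^{-1}M\bigr).
\]
To avoid an orthogonality fit, we apply it directly: the map $A\mapsto\sum_k 1/\mu_k$ is the maximum of $\tr(S_W^{-1}M_W)$ over $N$-dimensional subspaces $W\perp 1$. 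This requires $S$ to be nondegenerate, which follows since the $P_i$ are linearly independent.

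Next we compute the matrices. Writing $x=r\theta$, we have
\[
P_iP_j=g^2\theta_i\theta_j,\qquad
\nabla P_i\cdot\nabla P_j=g'^2\theta_i\theta_j+\frac{g^2}{r^2}\bigl(\delta_{ij}-\theta_i\theta_j\bigr).
\]
Hence
\[
M=\int_\Omega g^2\,\theta\theta^T\,d\gammaN,\qquad
S=\int_\Omega\Bigl[\frac{g^2}{r^2}I+\Bigl(g'^2-\frac{g^2}{r^2}\Bigr)\theta\theta^T\Bigr]d\gammaN.
\]
We normalise $\tr M$ and split $M=\frac{\tr M}{N}I+D$, with $D$ symmetric and trace-free; $D$ encodes the angular imbalance. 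We then apply the Gaussian raywise rearrangement. Along each ray $\theta$, the set $\Omega\cap\{r\theta\}$ is replaced by the radial interval $[0,\rho(\theta))$ of the same one-dimensional Gaussian-weighted mass $\int r^{N-1}e^{-r^2/2}\,dr$. Since $g^2$ is increasing and $\frac{g^2}{r^2}$ and $g'^2$ behave monotonically in the right direction (this needs the standard Gaussian analogue of Weinberger's lemma, namely that $g'^2+(N-1)\frac{g^2}{r^2}$ is decreasing, which I would prove from the ODE as in Chiacchio–Di Blasio), the rearrangement increases the raywise $g^2$-mass and decreases the raywise energy. Summing over directions, with the constraint $\int_{\Sph^{N-1}}\Phi(\rho(\theta))\,d\theta=\gammaN(B_R)$, reduces everything to the radial functions $a(\rho)=\int_0^\rho g^2\,d\gamma$ and $b(\rho)=\int_0^\rho\bigl(g'^2+(N-1)\frac{g^2}{r^2}\bigr)\,d\gamma$ (in the $r^{N-1}e^{-r^2/2}$ measure), together with the matrix $\int\phi(\rho(\theta))\,\theta\theta^T\,d\theta$.

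The final step is a finite-dimensional convexity inequality, and I expect it to be the main obstacle. It has the form
\[
\tr\Bigl(\bigl(\beta I+E\bigr)^{-1}\bigl(\alpha I+D\bigr)\Bigr)\ \geq\ N\frac{\alpha}{\beta},
\]
where $E$ and $D$ are the trace-free angular matrices built from the rearranged data. Expanding $\bigl(\beta I+E\bigr)^{-1}$, the first-order terms vanish because $E$ and $D$ are trace-free. The remaining terms must then be controlled using matrix convexity of $X\mapsto X^{-1}$, or Cauchy–Schwarz in the form $\tr(S^{-1}M)\geq\frac{(\tr M)^2}{\tr(MSM)}$, combined with the relation between $E$ and $D$ coming from the common direction weights. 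The relation itself comes from the radial mass constraint and the monotonicity of $b/a$, which yields $\frac{\tr M}{\tr S}\ge\frac{1}{\mu_1(B_R)}$ exactly as in the one-term Chiacchio–Di Blasio proof. I would try to show that $E$ is a nonpositive multiple of $D$ up to a positive-semidefinite error, mimicking He–Li–Tang. For equality, all the monotonicity inequalities must be equalities; this forces $\rho\equiv R$, so $\Omega=B_R$ up to a null set, and hence $\Omega=B_R$ since $\Omega$ is Lipschitz.
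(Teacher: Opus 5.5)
\textbf{Verdict: there is a genuine gap, at the final matrix step.}

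Your setup matches the paper's: the trial functions $g(r)x_i/r$ with $g$ frozen at $g(R)$ beyond $R$, orthogonality to constants from $\Omega=-\Omega$, the trace Ritz bound $\sum_{k\le N}1/\mu_k\ge\tr(S^{-1}M)$, and the polar formulas for $M$ and $S$. The gap is exactly the step you flag as the main obstacle, and the tools you propose for it cannot close it.

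\emph{First problem: the combined energy only controls the trace.} The Weinberger-type monotonicity of $g'^2+(N-1)g^2/r^2$ bounds only $\tr S$. An upper bound for the \emph{matrix}
\[
S=\int_{\Sph^{N-1}}\Bigl[\widetilde Q_\theta\,\theta\theta^{\mathsf T}+\widetilde H_\theta\,(\Id-\theta\theta^{\mathsf T})\Bigr]\dd\sigma,\qquad \widetilde Q_\theta=\int_{E_\theta}g'^2\dd\nu,\quad \widetilde H_\theta=\int_{E_\theta}\frac{g^2}{r^2}\dd\nu,
\]
with $E_\theta=\{r>0:r\theta\in\Omega\}$ and $\dd\nu=(2\pi)^{-N/2}e^{-r^2/2}r^{N-1}\dd r$, needs separate upper bounds on the radial and angular coefficients. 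This is because they multiply different positive semidefinite matrices.

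\emph{Second problem: the scalar relation is too weak.} The relation $\tr M/\tr S\ge1/\lambda$ is just the one-term Szeg\H{o}--Weinberger mechanism, and it does not imply $\tr(S^{-1}M)\ge N/\lambda$. For $N=2$, $\lambda=1$, $M=\operatorname{diag}(2-\epsilon,\epsilon)$ and $S=\operatorname{diag}(2-\sqrt\epsilon,\sqrt\epsilon)$, one has $\tr M=\tr S$ while $\tr(S^{-1}M)\to1$. What you actually need is that the trace-free parts of $M$ and $S$ be anti-aligned. With $D$ the trace-free part of $M$ and $E$ built through different radial profiles $\phi$, the matrices $\int\phi(\rho(\theta))\theta\theta^{\mathsf T}\dd\sigma$ are in general not proportional. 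So nothing forces $E$ to be a nonpositive multiple of $D$, and a second-order expansion would only be local anyway.

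\textbf{The missing idea: linearize in the Gaussian volume variable.} This puts all the angular imbalance into one trace-free matrix.
\begin{itemize}
\item Let $V(t)=\nu((0,t))$, $\tau=V^{-1}$, $Y(\theta)=\nu(E_\theta)$, $Y_R=V(R)$. Set $Z=\int_{\Sph^{N-1}}(Y(\theta)-Y_R)\theta\theta^{\mathsf T}\dd\sigma$, which is trace-free by the volume constraint.
\item After the bathtub step, note that $A(y)=\int_0^{\tau(y)}g^2\dd\nu$ is convex and $H(y)=\int_0^{\tau(y)}g^2r^{-2}\dd\nu$ is strictly concave, since $A'(y)=g(\tau(y))^2$ and $H'(y)=g(\tau(y))^2/\tau(y)^2$.
\item Replace both by their tangent lines at $Y_R$. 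With $\widetilde A_\theta=\int_{E_\theta}g^2\dd\nu$, $A_R=A(Y_R)$, $H_R=H(Y_R)$, this gives $\widetilde A_\theta\ge A_R+g(R)^2(Y(\theta)-Y_R)$ and $\widetilde H_\theta\le H_R+g(R)^2R^{-2}(Y(\theta)-Y_R)$.
\item The radial term needs no rearrangement. Since $g'=0$ on $(R,\infty)$, you get $\widetilde Q_\theta\le Q_R=\int_0^Rg'^2\dd\nu$; the tangent slope there is $g'(R)^2=0$.
\item Integrate against $\theta\theta^{\mathsf T}$ and $\Id-\theta\theta^{\mathsf T}$ and use $Q_R+(N-1)H_R=\lambda A_R$. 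This yields
\[
M\succeq a\Id+cZ,\qquad S\preceq\lambda a\Id-dZ,\qquad a=\frac{|\Sph^{N-1}|}{N}A_R,\quad c=g(R)^2,\quad d=\frac{g(R)^2}{R^2}.
\]
\item Hence, with $z_i$ the eigenvalues of $Z$,
\[
\tr(S^{-1}M)\ge\sum_i\frac{a+cz_i}{\lambda a-dz_i}=\frac{N}{\lambda}+\frac{\lambda c+d}{\lambda}\sum_i\frac{z_i}{\lambda a-dz_i}\ge\frac{N}{\lambda},
\]
by Jensen for the convex function $z\mapsto z/(\lambda a-dz)$ and $\sum_iz_i=0$.
\end{itemize}

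The same structure gives the equality case:
\begin{itemize}
\item strict convexity forces $Z=0$, $M=a\Id$ and $S=\lambda a\Id$;
\item then equality holds a.e.\ in the angular tangent bound, so strict concavity of $H$ gives $Y(\theta)=Y_R$;
\item finally, the strict bathtub principle gives $E_\theta=(0,R)$.
\end{itemize}

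\textbf{Minor points.}
\begin{itemize}
\item Your wording on the rearrangement is reversed: replacing $E_\theta$ by an initial interval \emph{decreases} the $g^2$-mass and \emph{increases} the energies.
\item Positive definiteness of $S$ needs connectedness of $\Omega$ and the zero mean of $\sum_i\alpha_iP_i$, not just linear independence of the $P_i$.
\end{itemize}
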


The proof has three main ingredients. Section~\ref{sec:radial} establishes the radial monotonicity properties of the first nonzero eigenfunctions on a Gaussian ball. Section~\ref{sec:matrix} recalls the trace Ritz principle and the trace-free matrix convexity lemma. Section~\ref{sec:raywise} develops the weighted raywise rearrangement, including its equality case. These ingredients are combined in Section~\ref{sec:proof}. The final appendix studies rectangular boxes by tensorization. Besides giving an exact optimisation result in a non-symmetric class, it makes explicit the spectral role of centring and helps explain why an origin-based hypothesis is natural in the Gaussian problem.

\section{The radial eigenfunction on the comparison ball}\label{sec:radial}

Let $B_R\subset\R^N$ be centred at the origin and set
\[
\lambda=\mu_1(B_R).
\]
By the separation-of-variables analysis in \cite[Lemma~4.1]{ChiacchioDiBlasio}, the first nonzero eigenspace has dimension $N$ and is spanned by
\begin{equation}\label{eq:ball-eigenfunctions}
u_i(r,\theta)=g(r)\theta_i,
\qquad i=1,\ldots,N,
\end{equation}
where $r=|x|$, $\theta=x/|x|\in\Sph^{N-1}$, and $g$ is positive on $(0,R]$ and solves
\begin{equation}\label{eq:radial-ode}
g''(r)+\left(\frac{N-1}{r}-r\right)g'(r)
+\left(\lambda-\frac{N-1}{r^2}\right)g(r)=0
\qquad\text{in }(0,R),
\end{equation}
with the regularity condition at the origin and the Neumann condition
\begin{equation}\label{eq:radial-bc}
g(0)=0,
\qquad g'(R)=0.
\end{equation}
The normalisation of $g$ will be irrelevant.

For later use, we make the behaviour at the singular endpoint explicit. Writing
\[
g(r)=r h\left(\frac{r^2}{2}\right)
\]
reduces \eqref{eq:radial-ode} to Kummer's equation
\[
t h''(t)+\left(\frac N2+1-t\right)h'(t)-\frac{1-\lambda}{2}h(t)=0.
\]
Thus the regular solution can be written as
\begin{equation}\label{eq:kummer-representation}
g(r)=\alpha r\,{}_1F_1\!\left(\frac{1-\lambda}{2};\frac N2+1;\frac{r^2}{2}\right),
\qquad \alpha>0,
\end{equation}
where ${}_1F_1$ is the confluent hypergeometric function; see \cite[Chapter~I]{Tricomi}. In particular,
\begin{equation}\label{eq:origin-expansion}
g(r)=\alpha r+O(r^3),
\qquad
g'(r)=\alpha+O(r^2)
\qquad\text{as }r\downarrow0.
\end{equation}

\begin{lemma}\label{lem:radial-monotonicity}
The radial factor $g$ satisfies
\begin{equation}\label{eq:gprime-positive}
g'(r)>0
\qquad\text{for }0<r<R,
\end{equation}
and
\begin{equation}\label{eq:g-over-r}
\left(\frac{g(r)}{r}\right)'<0
\qquad\text{for }0<r\leq R.
\end{equation}
Moreover,
\begin{equation}\label{eq:lambda-greater-one}
\lambda>1.
\end{equation}
\end{lemma}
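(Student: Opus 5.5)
We prove the three claims in the order \eqref{eq:gprime-positive}, then \eqref{eq:lambda-greater-one}, then \eqref{eq:g-over-r}. The tools are the equation \eqref{eq:radial-ode}, the boundary conditions \eqref{eq:radial-bc}, the expansion \eqref{eq:origin-expansion}, and the fact that $g>0$ on $(0,R]$. It is convenient to write \eqref{eq:radial-ode} in Sturm--Liouville form. Set $w(r)=r^{N-1}e^{-r^2/2}$. Then
\[
\bigl(w g'\bigr)'=-w\Bigl(\lambda-\frac{N-1}{r^2}\Bigr)g .
\]

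\textbf{Step 1: positivity of $g'$.} Suppose $g'$ vanishes somewhere in $(0,R)$, and let $r_0$ be its first zero. Near $0$ we have $g'>0$ by \eqref{eq:origin-expansion}, so $g$ increases on $(0,r_0)$ and $r_0$ is an interior local maximum of $g$. The plan is then a Sturm-type comparison.

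The function $v=g|_{(0,r_0)}$ solves the radial Neumann problem on $B_{r_0}$ with the same $\lambda$. Hence $v(r)\theta_i$ is a Neumann eigenfunction of the Gaussian operator on $B_{r_0}$ with eigenvalue $\lambda$, and $\lambda\ge\mu_1(B_{r_0})$. Because $v>0$, this radial factor is the lowest one in the $\ell=1$ sector. So in that sector $\lambda=\mu_1^{(\ell=1)}(B_{r_0})$, and likewise $\lambda=\mu_1^{(\ell=1)}(B_R)$.

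For the $\ell=1$ sector we use the Rayleigh quotient
\[
Q(v)=\frac{\int (v'^2+(N-1)v^2/r^2)\,w}{\int v^2 w}.
\]
The problem with $v(0)=0$ and the Neumann condition at the endpoint is equivalent to minimising $Q$. I expect to show strict monotonicity of this first eigenvalue in the radius by the following argument. On $(r_0,R)$ the equation forces $g'<0$ just after $r_0$, whenever $\lambda-(N-1)/r^2>0$ there; then $g'(R)=0$ is needed to close the problem. From the Sturm--Liouville identity,
\[
w(R)g'(R)-w(r_0)g'(r_0)=-\int_{r_0}^R w\Bigl(\lambda-\frac{N-1}{r^2}\Bigr)g .
\]
The left side is $0$. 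So $\lambda-(N-1)/r^2$ must change sign on $(r_0,R)$. But this quantity is increasing in $r$, and it is nonnegative at $r_0$, because $g''(r_0)\le0$ and the equation evaluated at $r_0$ gives $g''(r_0)=-(\lambda-(N-1)/r_0^2)g(r_0)$. A function that is increasing and nonnegative at $r_0$ is positive on $(r_0,R)$ and cannot change sign there. This contradiction proves \eqref{eq:gprime-positive}. The only degenerate case is equality at $r_0$, and there a higher-order Taylor expansion is needed.

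\textbf{Step 2: $\lambda>1$.} Integrate the identity over $(0,R)$. Both boundary terms vanish: $w(0)=0$ and $g'(R)=0$. This gives $\int_0^R w(\lambda-(N-1)/r^2)g=0$, hence $\lambda=(N-1)\int wg/r^2\big/\int wg$. This formula alone is not sharp enough, so I would instead compare with the whole space. The function $g_\infty(r)=r$ is the $\lambda=1$ solution; it is the Hermite polynomial $x_i$, since ${}_1F_1(0;\cdot;\cdot)=1$. Now suppose $\lambda\le1$. The first Kummer parameter is then $a=(1-\lambda)/2\ge0$, so every Taylor coefficient of ${}_1F_1(a;N/2+1;t)$ is nonnegative. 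It follows that $h'(t)\ge0$ and that
\[
g'(r)=\alpha\bigl(h+r^2h'\bigr)>0 \quad\text{for all } r>0 .
\]
This contradicts $g'(R)=0$. Equality $\lambda=1$ gives $g'=\alpha\neq0$, which is also impossible. Hence $\lambda>1$.

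\textbf{Step 3: $(g/r)'<0$.} We have $g/r=\alpha h(r^2/2)$, so it suffices to show $h'<0$ on $(0,R^2/2]$. Since $a<0$, $h'(0)=\alpha\,a/(N/2+1)<0$. Differentiating Kummer's equation shows that $k=h'$ solves
\[
t k''+(N/2+2-t)k'-(a+1)k=0 .
\]
Suppose $k$ has a first zero $t_1\le R^2/2$. Then $k$ attains a negative minimum at some $t_*<t_1$. There $k'(t_*)=0$ and $k''(t_*)\ge0$, while the equation gives $t_*k''(t_*)=(a+1)k(t_*)$. This is a contradiction provided $a+1>0$, that is $\lambda<3$. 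I would try to get $\lambda<3$ from Step 1: otherwise $a\le-1$, and $g$ would have an interior critical point before $R$. A cleaner alternative avoids this entirely. Note that $g'-g/r=r(g/r)'$, and compute $\frac{d}{dr}\bigl[r^{N+1}e^{-r^2/2}(g/r)'\bigr]=-(\lambda-1)r^{N}e^{-r^2/2}(g/r)$, which is negative because $\lambda>1$ by Step 2 and $g>0$. Integrating from $0$, where the bracket vanishes, gives $(g/r)'<0$ on $(0,R]$ directly. I expect this last computation to be the main check; Step 1 is the delicate step.
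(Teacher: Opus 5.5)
Your proof is correct once the exploratory asides are removed. It departs from the paper mainly in how you get $\lambda>1$.

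For $g'>0$, the paper sets $\Phi=pg'$ with $\Phi'=\bigl(\tfrac{N-1}{r^2}-\lambda\bigr)pg$ and first shows $\lambda>(N-1)/R^2$. It then observes that $\Phi$ increases up to the point where $(N-1)/r^2=\lambda$ and decreases afterwards, with zero limits at both ends. Your contradiction at the first critical point $r_0$ runs on the same two facts: $g''(r_0)\le0$ gives $\lambda\ge(N-1)/r_0^2$, and integrating $(wg')'$ over $(r_0,R)$ gives a positive integral that must vanish. Both rely only on monotonicity of $(N-1)/r^2$ and positivity of $g$, so this is the same mechanism in different packaging. No higher-order Taylor expansion is needed when $\lambda=(N-1)/r_0^2$: strict monotonicity already makes the integrand positive on the open interval $(r_0,R)$. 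The Rayleigh-quotient and $\ell=1$ sector discussion is never used and can be deleted.

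For $\lambda>1$, the paper does not use the hypergeometric series. It derives $\bigl(r^{N+1}e^{-r^2/2}f'\bigr)'=-(\lambda-1)r^{N+1}e^{-r^2/2}f$ for $f=g/r$. If $\lambda\le1$, this gives $f'\ge0$ and hence $g'(R)=f(R)+Rf'(R)>0$, a contradiction. The same identity then yields $(g/r)'<0$, so one computation from the ODE and the behaviour at the origin handles both claims. Your series argument is also valid given \eqref{eq:kummer-representation}: all coefficients of ${}_1F_1(a;\tfrac N2+1;t)$ are nonnegative when $a\ge0$, so $g'\ge\alpha$ on $(0,\infty)$. It makes the obstruction very transparent, but it depends on the explicit Kummer identification.

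Your ``cleaner alternative'' in Step 3 is exactly the paper's proof. The right-hand side should carry $r^{N+1}$ rather than $r^{N}$; this does not affect the sign.

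Discard the maximum-principle attempt on $k=h'$ altogether. It needs $\lambda<3$, which fails in general. Your own identity $\int_0^R w\bigl(\lambda-\tfrac{N-1}{r^2}\bigr)g=0$ forces $\lambda>(N-1)/R^2$, which is unbounded as $R\downarrow0$. In particular, the claim that $\lambda\ge3$ would produce an interior critical point of $g$ before $R$ is false.
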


\begin{proof}
Set
\[
p(r)=r^{N-1}e^{-r^2/2},
\qquad
W(r)=\frac{N-1}{r^2},
\qquad
\Phi(r)=p(r)g'(r).
\]
Equation \eqref{eq:radial-ode} is equivalent to
\begin{equation}\label{eq:phi-derivative}
\Phi'(r)=\bigl(W(r)-\lambda\bigr)p(r)g(r).
\end{equation}
By \eqref{eq:origin-expansion} and \eqref{eq:radial-bc},
\[
\lim_{r\downarrow0}\Phi(r)=0,
\qquad
\Phi(R)=0.
\]
If $\lambda\leq W(R)$, then $W(r)-\lambda>0$ for every $r\in(0,R)$, because $W$ is strictly decreasing. Since $g>0$, \eqref{eq:phi-derivative} would imply that $\Phi$ is strictly increasing from $0$, contradicting $\Phi(R)=0$. Hence $\lambda>W(R)$.

There is therefore a unique $r_0\in(0,R)$ such that $W(r_0)=\lambda$. From \eqref{eq:phi-derivative},
\[
\Phi'>0\quad\text{in }(0,r_0),
\qquad
\Phi'<0\quad\text{in }(r_0,R).
\]
Together with the endpoint values of $\Phi$, this gives $\Phi(r)>0$ for $0<r<R$, and hence \eqref{eq:gprime-positive}.

Now put
\[
f(r)=\frac{g(r)}{r}.
\]
A direct substitution into \eqref{eq:radial-ode} gives
\begin{equation}\label{eq:f-divergence}
\left(r^{N+1}e^{-r^2/2}f'(r)\right)'
=-(\lambda-1)r^{N+1}e^{-r^2/2}f(r).
\end{equation}
The expansion \eqref{eq:origin-expansion} yields
\[
\lim_{r\downarrow0}r^{N+1}e^{-r^2/2}f'(r)=0.
\]
Assume that $\lambda\leq1$. Then the right-hand side of \eqref{eq:f-divergence} is nonnegative, so $f'\geq0$ in $(0,R)$. Since $f(R)>0$, this would imply
\[
g'(R)=f(R)+Rf'(R)>0,
\]
contrary to \eqref{eq:radial-bc}. Thus \eqref{eq:lambda-greater-one} holds. Integrating \eqref{eq:f-divergence} from $0$ to $r$ now gives
\[
r^{N+1}e^{-r^2/2}f'(r)
=-(\lambda-1)\int_0^r s^{N+1}e^{-s^2/2}f(s)\,\dd s<0,
\]
which proves \eqref{eq:g-over-r}.
\end{proof}

Following the Gaussian Weinberger construction in \cite{ChiacchioDiBlasio,GaoWang}, extend $g$ constantly outside the comparison ball:
\begin{equation}\label{eq:G-definition}
G(r)=
\begin{cases}
g(r),&0\leq r\leq R,\\
g(R),&r\geq R.
\end{cases}
\end{equation}
Since $g'(R)=0$, the function $G$ is of class $C^1$ on $[0,\infty)$. Lemma~\ref{lem:radial-monotonicity} gives
\begin{equation}\label{eq:G-monotonicities}
\begin{aligned}
&G^2 \text{ is nondecreasing on }[0,\infty),\\
&r\longmapsto\frac{G(r)^2}{r^2}
\text{ is strictly decreasing on }(0,\infty).
\end{aligned}
\end{equation}
and $G'(r)=0$ for $r>R$.

Introduce the radial Gaussian measure
\begin{equation}\label{eq:radial-measure}
\dd\nu(r)=(2\pi)^{-N/2}e^{-r^2/2}r^{N-1}\dd r
\end{equation}
and the quantities
\begin{equation}\label{eq:AQH}
\begin{aligned}
A_R&=\int_0^R G(r)^2\,\dd\nu(r),
&Q_R&=\int_0^R G'(r)^2\,\dd\nu(r),\\
H_R&=\int_0^R \frac{G(r)^2}{r^2}\,\dd\nu(r).
\end{aligned}
\end{equation}
Multiplying the divergence form of \eqref{eq:radial-ode} by $g$ and integrating, the boundary terms vanish by \eqref{eq:origin-expansion} and \eqref{eq:radial-bc}. We obtain the energy identity
\begin{equation}\label{eq:energy-identity}
Q_R+(N-1)H_R=\lambda A_R.
\end{equation}

\section{Matrix tools for reciprocal eigenvalue sums}\label{sec:matrix}

We write $A\succeq B$ for real symmetric matrices when $A-B$ is positive semidefinite.

\begin{lemma}\label{lem:trace-ritz}
Let $P_1,\ldots,P_N\in H^1(\Omega,\gammaN)$ be linearly independent in $L^2(\Omega,\gammaN)$ and satisfy
\[
\int_\Omega P_i\,\dd\gammaN=0,
\qquad i=1,\ldots,N.
\]
Define
\begin{equation}\label{eq:mass-stiffness}
M_{ij}=\int_\Omega P_iP_j\,\dd\gammaN,
\qquad
K_{ij}=\int_\Omega \nabla P_i\cdot\nabla P_j\,\dd\gammaN.
\end{equation}
If $K$ is positive definite, then
\begin{equation}\label{eq:ritz-trace}
\sum_{k=1}^{N}\frac{1}{\mu_k(\Omega)}
\geq \tr(K^{-1}M).
\end{equation}
\end{lemma}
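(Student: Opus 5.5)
The plan is to reduce the statement to the classical trace form of the Rayleigh--Ritz principle on the orthogonal complement of the constants. Let $0=\mu_0<\mu_1\le\mu_2\le\cdots$ be the eigenvalues with an $L^2(\Omega,\gammaN)$-orthonormal basis of eigenfunctions $\phi_0=\gammaN(\Omega)^{-1/2},\phi_1,\phi_2,\ldots$. The hypothesis $\int_\Omega P_i\,\dd\gammaN=0$ places $V=\mathrm{span}\{P_1,\ldots,P_N\}$ inside $\phi_0^\perp$. On this subspace the form $\mathcal E_\Omega$ is bounded below by $\mu_1\|\cdot\|^2$, and its eigenvalues are $\mu_1,\mu_2,\ldots$.

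The first step is a change of basis. The quantity $\tr(K^{-1}M)$ is invariant under $P\mapsto CP$ for invertible $C$, since $M\mapsto CMC^T$ and $K\mapsto CKC^T$. Because $M$ is positive definite (by linear independence) and $K$ is positive definite by hypothesis, we can simultaneously diagonalise. Choose $C$ with $CKC^T=I$ and $CMC^T=\mathrm{diag}(m_1,\ldots,m_N)$. Equivalently, take new trial functions $Q_i$ that are $\mathcal E_\Omega$-orthonormal and $L^2$-orthogonal. Then
\[
\tr(K^{-1}M)=\sum_i \|Q_i\|^2=\sum_i \frac{\|Q_i\|^2}{\mathcal E_\Omega(Q_i,Q_i)}.
\]
More intrinsically, $\tr(K^{-1}M)$ is the sum of the eigenvalues of $M$ relative to $K$, namely the Ritz values $1/\rho_k$, where $\rho_1\le\cdots\le\rho_N$ are the eigenvalues of the pencil $K-\rho M$.

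The second step is the comparison $\rho_k\ge\mu_k$. By the Courant--Fischer min-max principle applied on $\phi_0^\perp$, the $k$-th Ritz value of $\mathcal E_\Omega$ restricted to an $N$-dimensional subspace of $\phi_0^\perp$ dominates the $k$-th eigenvalue of the operator there. Concretely, $\rho_k=\min_{W\subset V,\dim W=k}\max_{W}\mathcal E_\Omega(u,u)/\|u\|^2$. Since $W\subset\phi_0^\perp\cap H^1$ and $\mu_k=\min$ over all $k$-dimensional subspaces of $\phi_0^\perp\cap H^1$ of the same quotient, we get $\rho_k\ge\mu_k$. The variational characterisation of $\mu_k$ itself follows from the compact embedding \eqref{eq:compact-embedding}: the resolvent is compact and the form domain is $H^1(\Omega,\gammaN)$. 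Summing $1/\rho_k\le1/\mu_k$ over $k=1,\ldots,N$ gives \eqref{eq:ritz-trace}.

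An alternative route avoids min-max and uses Ky Fan's principle directly. With the basis $Q_i$ above, $\sum_i\|Q_i\|^2$ is the trace of the compact positive operator $T=\mathcal{L}^{-1}$ on $\phi_0^\perp$, compressed onto $\mathcal E$-orthonormal vectors. Ky Fan then bounds it by $\sum_{k=1}^N 1/\mu_k$. I would present the min-max version.

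The main obstacle is modest: justifying the min-max characterisation in the weighted, possibly unbounded setting. This reduces to the compact embedding and the fact that $\mathcal E_\Omega$ is closed on $H^1(\Omega,\gammaN)$. The only other point to check is that the linear algebra (simultaneous diagonalisation and invariance of the trace) requires positivity of both $M$ and $K$, which the hypotheses supply.
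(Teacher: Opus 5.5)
Your proposal is correct and follows the paper's proof. Both arguments place $V$ in the orthogonal complement of the constants and use min--max to get $\mu_k\le\rho_k$ for the Ritz values. Both then identify $\sum_k 1/\rho_k$ with $\tr(K^{-1}M)$. The paper does this last step through the eigenvalues of $K^{-1/2}MK^{-1/2}$, while you simultaneously diagonalise $K$ and $M$; the two are equivalent.
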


\begin{proof}
Let $V=\operatorname{span}\{P_1,\ldots,P_N\}$. The Gram matrix $M$ is positive definite. Let $0<\eta_1\leq\cdots\leq\eta_N$ be the Ritz values obtained by restricting the weighted Neumann Rayleigh quotient to $V$. Since $V$ is contained in the weighted orthogonal complement of the constants, the min--max principle gives
\[
\mu_k(\Omega)\leq\eta_k,
\qquad k=1,\ldots,N.
\]
The $\eta_k$ are the generalized eigenvalues of $Ka=\eta Ma$, and the eigenvalues of $K^{-1/2}MK^{-1/2}$ are their reciprocals. Hence
\[
\sum_{k=1}^{N}\frac{1}{\mu_k(\Omega)}
\geq\sum_{k=1}^{N}\frac{1}{\eta_k}
=\tr(K^{-1/2}MK^{-1/2})
=\tr(K^{-1}M).
\]
This is the trace form of Hersch's variational principle; compare \cite{Hersch,HeLiTang}.
\end{proof}

The following finite-dimensional lemma is the mechanism that recovers the last reciprocal term. It is the trace-free convexity argument introduced in \cite[Lemma~2.2]{HeLiTang} and subsequently used in the space-form setting in \cite{YouZhang}; we include the proof and its equality case.

\begin{lemma}\label{lem:matrix-convexity}
Let $a,c,d,\lambda>0$, and let $Z$ be a real symmetric $N\times N$ matrix with $\tr Z=0$. Suppose that $M$ and $K$ are real symmetric matrices satisfying
\[
M\succeq0,
\qquad
M\succeq a\Id+cZ,
\qquad
0<K\preceq\lambda a\Id-dZ.
\]
Then
\begin{equation}\label{eq:matrix-conclusion}
\tr(K^{-1}M)\geq\frac{N}{\lambda}.
\end{equation}
If equality holds in \eqref{eq:matrix-conclusion}, then
\begin{equation}\label{eq:matrix-equality}
Z=0,
\qquad
M=a\Id,
\qquad
K=\lambda a\Id.
\end{equation}
\end{lemma}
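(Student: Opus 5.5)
The plan is to reduce the matrix inequality to a scalar Jensen inequality in three monotone steps, keeping track of where equality can occur. Set $L=\lambda a\Id-dZ$. The hypothesis $0<K\preceq L$ gives $L\succ0$. Operator monotonicity of inversion on positive definite matrices then gives $K^{-1}\succeq L^{-1}$; concretely, $L^{-1/2}KL^{-1/2}\preceq\Id$, so its inverse is $\succeq\Id$. Since $M\succeq0$, the trace of a product of two positive semidefinite matrices is nonnegative, so
\[
\tr(K^{-1}M)-\tr(L^{-1}M)=\tr\bigl((K^{-1}-L^{-1})M\bigr)\geq0 .
\]
Next, $L^{-1}\succ0$ and $M-(a\Id+cZ)\succeq0$ give, by the same trace positivity, $\tr(L^{-1}M)\geq\tr\bigl(L^{-1}(a\Id+cZ)\bigr)$.

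Now diagonalise $Z$ in an orthonormal basis, with eigenvalues $z_1,\dots,z_N$. Then $\sum_i z_i=\tr Z=0$, and positivity of $L$ means $\lambda a-dz_i>0$ for every $i$. Since $L$ and $a\Id+cZ$ are simultaneously diagonal,
\[
\tr\bigl(L^{-1}(a\Id+cZ)\bigr)=\sum_{i=1}^N\varphi(z_i),\qquad
\varphi(z)=\frac{a+cz}{\lambda a-dz},\quad z<\frac{\lambda a}{d}.
\]
Rewrite $\varphi$ as
\[
\varphi(z)=-\frac cd+\frac{a(1+c\lambda/d)}{\lambda a-dz}.
\]
The numerator $a(1+c\lambda/d)$ is strictly positive, and $z\mapsto 1/(\lambda a-dz)$ is strictly convex on the half-line where the denominator is positive, because $d>0$. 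Hence $\varphi$ is strictly convex there. Jensen's inequality with $\sum_i z_i=0$ then gives $\sum_i\varphi(z_i)\geq N\varphi(0)=N/\lambda$. Chaining the three inequalities proves \eqref{eq:matrix-conclusion}.

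For the equality case, suppose $\tr(K^{-1}M)=N/\lambda$. Then all three inequalities above are equalities. Equality in Jensen for a strictly convex function forces $z_1=\cdots=z_N=0$, so $Z=0$ and $L=\lambda a\Id$. The second step becomes $\tr\bigl((\lambda a)^{-1}(M-a\Id)\bigr)=0$ with $M-a\Id\succeq0$. A positive semidefinite matrix with zero trace vanishes, so $M=a\Id$. The first step becomes $a\,\tr(K^{-1}-L^{-1})=0$ with $K^{-1}-L^{-1}\succeq0$, so $K^{-1}=L^{-1}$ and $K=\lambda a\Id$. This is \eqref{eq:matrix-equality}.

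The only point needing care is the operator monotonicity $K\preceq L\Rightarrow K^{-1}\succeq L^{-1}$, together with keeping the domain restriction $\lambda a-dz_i>0$ so that Jensen's inequality is applied on an interval where $\varphi$ is convex. Both are routine, so I expect no real obstacle.
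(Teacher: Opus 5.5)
Your proposal is correct and follows the paper's proof essentially line by line. It uses the same chain $\tr(K^{-1}M)\geq\tr(T^{-1}M)\geq\tr\bigl(T^{-1}(a\Id+cZ)\bigr)$ via monotonicity of inversion and trace positivity, then diagonalises $Z$, applies Jensen on the half-line $\lambda a-dz>0$, and unwinds the equality case in the same three steps. The only cosmetic difference is the splitting of $\varphi(z)=\frac{a+cz}{\lambda a-dz}$: you write it as $-c/d$ plus a positive multiple of $1/(\lambda a-dz)$, while the paper writes it as $1/\lambda$ plus $\frac{\lambda c+d}{\lambda}\cdot\frac{z}{\lambda a-dz}$; both expose the same strict convexity.
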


\begin{proof}
Set
\[
T=\lambda a\Id-dZ.
\]
The assumption $0<K\preceq T$ implies $T>0$ and, by the order-reversing property of inversion,
\[
K^{-1}\succeq T^{-1}.
\]
Using $M\succeq0$ and $M-(a\Id+cZ)\succeq0$, we obtain
\begin{equation}\label{eq:matrix-chain}
\tr(K^{-1}M)
\geq\tr(T^{-1}M)
\geq\tr\bigl(T^{-1}(a\Id+cZ)\bigr).
\end{equation}
Diagonalize $Z$ orthogonally and denote its eigenvalues by $z_1,\ldots,z_N$. Since $\sum_i z_i=0$ and $\lambda a-dz_i>0$,
\begin{align*}
\tr\bigl(T^{-1}(a\Id+cZ)\bigr)
&=\sum_{i=1}^{N}\frac{a+cz_i}{\lambda a-dz_i}\\
&=\frac{N}{\lambda}
+\frac{\lambda c+d}{\lambda}
\sum_{i=1}^{N}\frac{z_i}{\lambda a-dz_i}.
\end{align*}
The function
\[
f(z)=\frac{z}{\lambda a-dz}
\]
is strictly convex on $\{z:\lambda a-dz>0\}$, because
\[
f''(z)=\frac{2\lambda ad}{(\lambda a-dz)^3}>0.
\]
Jensen's inequality and $\sum_i z_i=0$ yield
\[
\sum_{i=1}^{N}f(z_i)\geq Nf(0)=0,
\]
which proves \eqref{eq:matrix-conclusion}.

Assume now that equality holds. Strict convexity gives $z_1=\cdots=z_N$, and the trace-free condition yields $Z=0$. Thus $T=\lambda a\Id$. Equality in the second inequality of \eqref{eq:matrix-chain} gives
\[
0=\tr\bigl(T^{-1}(M-a\Id)\bigr).
\]
Since $T^{-1}>0$ and $M-a\Id\succeq0$, it follows that $M=a\Id$. Equality in the first inequality of \eqref{eq:matrix-chain} then gives
\[
0=\tr\bigl((K^{-1}-T^{-1})M\bigr)
=a\tr(K^{-1}-T^{-1}).
\]
The matrix $K^{-1}-T^{-1}$ is positive semidefinite, hence it is zero. Therefore $K=T=\lambda a\Id$.
\end{proof}

\section{Weighted raywise rearrangement}\label{sec:raywise}

We first record a weighted one-dimensional form of the bathtub principle, including the equality case needed below; compare \cite[Theorem~1.14]{LiebLoss} and \cite[Lemma~3.2]{HeLiTang}.

\begin{lemma}\label{lem:bathtub}
Let $0<L\leq\infty$, let $\rho\geq0$ be locally integrable on $[0,L)$, and assume $\rho>0$ almost everywhere in $(0,L)$. Set
\[
\dd\mu(r)=\rho(r)\dd r,
\qquad
V(t)=\mu((0,t)),
\qquad
Y_*=\mu((0,L)).
\]
Then $V$ is continuous and strictly increasing on $[0,L)$. For $0\leq y<Y_*$, let $r_y=V^{-1}(y)$. If $Y_*<\infty$, we also allow $y=Y_*$ and set $r_{Y_*}=L$, with the convention that $(0,L)=(0,\infty)$ when $L=\infty$. If $E\subset(0,L)$ is measurable and $\mu(E)=y$, then:
\begin{enumerate}
\item if $w$ is nondecreasing, then
\[
\int_E w\,\dd\mu\geq\int_0^{r_y}w\,\dd\mu;
\]
\item if $w$ is nonincreasing, then
\[
\int_E w\,\dd\mu\leq\int_0^{r_y}w\,\dd\mu.
\]
\end{enumerate}
If $0<y<Y_*$ and $w$ is strictly decreasing, equality in the second inequality holds only when $E=(0,r_y)$ up to a $\mu$-null set.
\end{lemma}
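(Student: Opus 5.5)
The plan is to prove the bathtub lemma by the usual exchange argument, comparing $E$ with the initial segment $I=(0,r_y)$ piece by piece. Continuity and strict monotonicity of $V$ come first. Continuity follows from local integrability of $\rho$ and dominated convergence, since $\mu$ of a point is zero. Strict monotonicity follows because $\rho>0$ a.e.: for $s<t<L$ the interval $(s,t)$ has positive Lebesgue measure, so $V(t)-V(s)=\int_s^t\rho>0$. Hence $V:[0,L)\to[0,Y_*)$ is a homeomorphism, $r_y$ is well defined with $\mu(I)=y$, and the convention $r_{Y_*}=L$ covers the endpoint case.

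For (2), with $w$ nonincreasing, write
\[
\int_E w\,\dd\mu-\int_I w\,\dd\mu=\int_{E\setminus I}w\,\dd\mu-\int_{I\setminus E}w\,\dd\mu .
\]
Since $\mu(E)=\mu(I)=y$, the two sets $E\setminus I$ and $I\setminus E$ have equal $\mu$-measure, say $m$. The set $E\setminus I$ lies in $[r_y,L)$, where $w\le w(r_y)$, and $I\setminus E$ lies in $(0,r_y)$, where $w\ge w(r_y)$. Hence the difference is at most $w(r_y)m-w(r_y)m=0$. Two technical points need handling:

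\begin{itemize}
\item If $m=\infty$, which can only happen when $y=Y_*=\infty$, the statement is interpreted as in the case $y=Y_*$: $E$ then has full measure up to a null set, or one reads the integrals as extended values.
\item If $w(r_y)$ is infinite at the endpoint, use the one-sided limit $w(r_y^-)$ instead. I would work with a finite threshold value $c\in[w(r_y^+),w(r_y^-)]$, which always exists for monotone $w$.
\end{itemize}

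Integrability assumptions are implicit. I take the integrals to be meaningful, for instance $w$ bounded on the relevant sets or of one sign, and note this. Part (1) follows by the same argument with the inequalities reversed, or by applying (2) to $-w$.

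For the equality case, take $0<y<Y_*$, so that $r_y\in(0,L)$, and take $w$ strictly decreasing. Then $w>c$ on $(0,r_y)$ and $w<c$ on $(r_y,L)$, and the difference equals
\[
\int_{E\setminus I}(w-c)\,\dd\mu+\int_{I\setminus E}(c-w)\,\dd\mu\le 0 .
\]
Both integrands have strict sign (negative in the first integral, positive in the second) except on the $\mu$-null set $\{r_y\}$. Equality therefore forces $\mu(E\setminus I)=\mu(I\setminus E)=0$, that is, $E=(0,r_y)$ up to a $\mu$-null set.

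The main obstacle is bookkeeping rather than ideas: choosing the threshold $c$ when $w$ jumps at $r_y$, and handling infinite measure or infinite values when $L=\infty$ or $Y_*=\infty$. Splitting the difference through the constant $c$ as in the display above resolves both issues uniformly.
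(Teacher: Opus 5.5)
Your proof is correct. The inequality part is the same exchange argument the paper uses. You compare $E$ with $I=(0,r_y)$ through $E\setminus I$ and $I\setminus E$, which have equal finite measure, and bound $w$ on each piece by the threshold value at $r_y$.

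The equality case is where you genuinely differ. The paper argues by contradiction. If $\mu(I\setminus E)>0$, it uses atomlessness of $\mu$ to extract $\delta$-separated subsets $A\subset I\setminus E$ and $B\subset E\setminus I$ of equal positive measure. Swapping $B$ for $A$ then gives a set of measure $y$ with strictly larger integral, which contradicts (2). You instead write the difference as $\int_{E\setminus I}(w-c)\,\dd\mu+\int_{I\setminus E}(c-w)\,\dd\mu$. Both integrands are strictly negative off the null set $\{r_y\}$, so you read off $\mu(E\setminus I)=\mu(I\setminus E)=0$ directly. This is shorter and needs no extraction of subsets via atomlessness.

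There are two small corrections.
\begin{enumerate}
\item Your parenthetical says the second integrand is positive. In fact it is $c-w<0$ on $(0,r_y)$, and it must be negative: otherwise the display would not be $\leq 0$, and a vanishing sum would not force both sets to be null.
\item The extra bookkeeping you flag is unnecessary.
\begin{enumerate}
\item In every case the lemma allows, $y<\infty$, so $m\leq y<\infty$ and $m=\infty$ never arises.
\item For $0<y<Y_*$ the point $r_y$ lies in $(0,L)$, where a real-valued monotone $w$ is finite. So $c=w(r_y)$ already works; for strictly decreasing $w$ it gives $w>c$ on $(0,r_y)$ and $w<c$ on $(r_y,L)$.
\end{enumerate}
What you should do instead is dispose separately of $y=0$ and $y=Y_*<\infty$. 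There $r_y\in\{0,L\}$ and $w(r_y)$ need not be defined. The claim is trivial in both cases, because $E$ is $\mu$-null, respectively $E=(0,L)$ up to a null set. This is how the paper handles them.
\end{enumerate}
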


\begin{proof}
The cases $y=0$ and, when $Y_*<\infty$, $y=Y_*$ are immediate. We therefore assume $0<y<Y_*$. Write $I_y=(0,r_y)$. Since $\mu(E)=\mu(I_y)$,
\[
\mu(E\setminus I_y)=\mu(I_y\setminus E).
\]
If $w$ is nondecreasing, then $w\geq w(r_y)$ on $E\setminus I_y$ and $w\leq w(r_y)$ on $I_y\setminus E$, up to null sets. Consequently,
\begin{align*}
\int_E w\,\dd\mu-\int_{I_y}w\,\dd\mu
&=\int_{E\setminus I_y}w\,\dd\mu
 -\int_{I_y\setminus E}w\,\dd\mu\\
&\geq w(r_y)\bigl(\mu(E\setminus I_y)-\mu(I_y\setminus E)\bigr)=0.
\end{align*}
The nonincreasing case is analogous.

Suppose now that $w$ is strictly decreasing and equality holds in the second inequality. If $\mu(I_y\setminus E)>0$, then also $\mu(E\setminus I_y)>0$. Since $\mu$ is atomless, there is a $\delta>0$ and measurable subsets
\[
A\subset(I_y\setminus E)\cap(0,r_y-\delta),
\qquad
B\subset(E\setminus I_y)\cap(r_y+\delta,L)
\]
of the same positive finite $\mu$-measure. The strict decrease of $w$ gives $w|_A>w|_B$, so replacing $B$ by $A$ strictly increases the integral, a contradiction. Hence $E=I_y$ modulo a null set.
\end{proof}

We now apply the lemma to the radial Gaussian measure \eqref{eq:radial-measure}. Set
\begin{equation}\label{eq:Y-star}
\begin{aligned}
Y_*&=\nu((0,\infty))<\infty,
& V(t)&=\nu((0,t)),\\
\tau&=V^{-1}:[0,Y_*)\longrightarrow[0,\infty).
\end{aligned}
\end{equation}
We extend $\tau$ to the closed interval $[0,Y_*]$ by setting $\tau(Y_*)=\infty$.
For $0\leq y\leq Y_*$, define
\begin{equation}\label{eq:A-H-functions}
A(y)=\int_0^{\tau(y)}G(r)^2\,\dd\nu(r),
\qquad
H(y)=\int_0^{\tau(y)}\frac{G(r)^2}{r^2}\,\dd\nu(r),
\end{equation}
where, at $y=Y_*$, the upper limit is understood as $+\infty$.
For $0<y<Y_*$, differentiation with respect to the volume variable gives
\begin{equation}\label{eq:A-H-derivatives}
A'(y)=G(\tau(y))^2,
\qquad
H'(y)=\frac{G(\tau(y))^2}{\tau(y)^2}.
\end{equation}
By \eqref{eq:G-monotonicities}, $A$ is convex and $H$ is strictly concave on $[0,Y_*)$. Both functions admit finite continuous extensions at $Y_*$, and the corresponding tangent inequalities remain valid at the endpoint. Let
\[
Y_R=V(R)=\nu((0,R)).
\]
For every $y\in[0,Y_*]$, the tangent inequalities at $Y_R$ are
\begin{equation}\label{eq:A-tangent}
A(y)\geq A_R+G(R)^2(y-Y_R)
\end{equation}
and
\begin{equation}\label{eq:H-tangent}
H(y)\leq H_R+\frac{G(R)^2}{R^2}(y-Y_R).
\end{equation}
Since $H'$ is strictly decreasing on $(0,Y_*)$, equality in \eqref{eq:H-tangent} occurs only for $y=Y_R$; this remains true at the endpoint $y=Y_*$ by continuity.

\begin{lemma}\label{lem:raywise-estimates}
Let $E\subset(0,\infty)$ be measurable and set $Y=\nu(E)$. Then
\begin{equation}\label{eq:ray-mass-general}
\int_E G(r)^2\,\dd\nu(r)
\geq A_R+G(R)^2(Y-Y_R),
\end{equation}
\begin{equation}\label{eq:ray-angular-general}
\int_E\frac{G(r)^2}{r^2}\,\dd\nu(r)
\leq H_R+\frac{G(R)^2}{R^2}(Y-Y_R),
\end{equation}
and
\begin{equation}\label{eq:ray-radial-general}
\int_E G'(r)^2\,\dd\nu(r)\leq Q_R.
\end{equation}
If equality holds in \eqref{eq:ray-angular-general}, then equality holds simultaneously in the bathtub bound
\[
\int_E\frac{G(r)^2}{r^2}\,\dd\nu(r)\leq H(Y)
\]
and in the tangent bound \eqref{eq:H-tangent} at $y=Y$.
\end{lemma}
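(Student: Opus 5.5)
The plan is to combine the bathtub principle (Lemma~\ref{lem:bathtub}) for the radial measure $\nu$ with the tangent inequalities \eqref{eq:A-tangent} and \eqref{eq:H-tangent}. Fix a measurable $E\subset(0,\infty)$ with $Y=\nu(E)\in[0,Y_*]$. Through the change of variables $y=V(r)$, the interval $(0,\tau(Y))$ has $\nu$-measure exactly $Y$. So Lemma~\ref{lem:bathtub} applies with $\rho(r)=(2\pi)^{-N/2}e^{-r^2/2}r^{N-1}$ and $L=\infty$; since $Y_*<\infty$, the endpoint case $y=Y_*$ is allowed.

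For \eqref{eq:ray-mass-general}, I would take $w=G^2$, which is nondecreasing by \eqref{eq:G-monotonicities}. Part (1) of Lemma~\ref{lem:bathtub} gives $\int_E G^2\,\dd\nu\geq A(Y)$, and \eqref{eq:A-tangent} at $y=Y$ then gives the stated bound. One point needs checking here: \eqref{eq:A-tangent} is a tangent-line inequality for the convex function $A$ at $Y_R$. For $y\le Y_R$ we have $A'(y)=G(\tau(y))^2\le G(R)^2$. For $y\ge Y_R$, $A$ is actually affine with slope $G(R)^2$, because $G$ is constant beyond $R$. So the inequality is fine and in fact holds with equality for $Y\ge Y_R$.

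For \eqref{eq:ray-angular-general}, I would take $w=G^2/r^2$, which is strictly decreasing. Part (2) of Lemma~\ref{lem:bathtub} gives $\int_E G^2/r^2\,\dd\nu\le H(Y)$, and \eqref{eq:H-tangent} at $y=Y$ finishes. The equality statement follows by chaining: the total inequality is the sum of two one-sided inequalities, bathtub then tangent. If the overall chain is an equality, each step must be an equality, which gives both claims at once.

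For \eqref{eq:ray-radial-general}, I would use that $G'=0$ on $(R,\infty)$. Then
\[
\int_E G'^2\,\dd\nu=\int_{E\cap(0,R)}G'^2\,\dd\nu\le\int_0^R G'^2\,\dd\nu=Q_R,
\]
since the integrand is nonnegative; no rearrangement is needed.

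The only delicate step is the endpoint $Y=Y_*$ (for instance $E$ of full $\nu$-measure), where $\tau(Y_*)=\infty$. There I would rely on the finite continuous extensions of $A$ and $H$ at $Y_*$ and on the endpoint case of Lemma~\ref{lem:bathtub}, where $E$ coincides with $(0,\infty)$ up to a null set. Both sides are then literally equal to $A(Y_*)$ and $H(Y_*)$, so the bathtub step is trivial and the argument goes through.
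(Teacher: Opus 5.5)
Your proposal is correct and follows essentially the same route as the paper. You use the bathtub principle for the nondecreasing $G^2$ and the strictly decreasing $G^2/r^2$, combined with the tangent inequalities at $Y_R$, the truncation $G'=0$ on $(R,\infty)$ for the radial term, and the two-step chain for the equality assertion. Your additional checks, that $A$ is affine with slope $G(R)^2$ for $y\geq Y_R$ and that the endpoint case $Y=Y_*$ is trivial since then $E=(0,\infty)$ up to a $\nu$-null set, are accurate and fill in details the paper leaves implicit.
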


\begin{proof}
By Lemma~\ref{lem:bathtub}, the initial interval $(0,\tau(Y))$ minimises the integral of the nondecreasing function $G^2$ and maximises the integral of the strictly decreasing function $G^2/r^2$. Hence
\[
\int_E G^2\,\dd\nu\geq A(Y),
\qquad
\int_E\frac{G^2}{r^2}\,\dd\nu\leq H(Y).
\]
Combining these inequalities with \eqref{eq:A-tangent} and \eqref{eq:H-tangent} proves \eqref{eq:ray-mass-general} and \eqref{eq:ray-angular-general}. Since $G'=0$ on $(R,\infty)$,
\[
\int_E G'^2\,\dd\nu
\leq\int_0^R G'^2\,\dd\nu=Q_R,
\]
which is \eqref{eq:ray-radial-general}. The last assertion follows because \eqref{eq:ray-angular-general} is obtained from a chain of two inequalities with the same first and last terms.
\end{proof}

\section{Proof of the main theorem}\label{sec:proof}

\begin{proof}[Proof of Theorem~\ref{thm:main}]
For $i=1,\ldots,N$, define
\begin{equation}\label{eq:trial-functions}
P_i(x)=G(|x|)\frac{x_i}{|x|}
\qquad (x\neq0),
\qquad
P_i(0)=0.
\end{equation}
By \eqref{eq:origin-expansion}, $G(r)/r$ and $G'(r)$ are bounded near the origin. For $x\neq0$, writing $r=|x|$ and $\theta=x/r$, one has
\begin{equation}\label{eq:gradient-trial}
|\nabla P_i(x)|^2
=G'(r)^2\theta_i^2
+\frac{G(r)^2}{r^2}(1-\theta_i^2).
\end{equation}
Both $G$ and the functions $G'$ and $G/r$ are bounded on their respective domains. Since $\gammaN(\Omega)<1$, it follows from \eqref{eq:gradient-trial} that $P_i\in H^1(\Omega,\gammaN)$.

Since $\Omega=-\Omega$ and every $P_i$ is odd,
\begin{equation}\label{eq:mean-zero}
\int_\Omega P_i\,\dd\gammaN=0,
\qquad i=1,\ldots,N.
\end{equation}
The functions $P_1,\ldots,P_N$ are linearly independent. Indeed, let $\alpha\in\R^N$. If
\[
P_\alpha=\sum_{i=1}^{N}\alpha_iP_i=0
\quad\text{a.e. in }\Omega,
\]
then continuity gives $P_\alpha=0$ throughout $\Omega$. Away from the origin,
\[
P_\alpha(x)=G(|x|)\frac{\alpha\cdot x}{|x|}.
\]
Since $G(r)>0$ for $r>0$, a nonempty open set could be contained in the zero set of $P_\alpha$ only if $\alpha=0$.

Let $M$ and $K$ be the matrices in \eqref{eq:mass-stiffness}. The matrix $M$ is positive definite. The matrix $K$ is also positive definite: if $\alpha^{\mathsf T}K\alpha=0$, then $P_\alpha$ has zero weak gradient and is constant on the connected domain $\Omega$. Its weighted mean is zero by \eqref{eq:mean-zero}, so $P_\alpha=0$, and linear independence gives $\alpha=0$. Lemma~\ref{lem:trace-ritz} therefore yields
\begin{equation}\label{eq:first-reduction}
\sum_{k=1}^{N}\frac{1}{\mu_k(\Omega)}
\geq\tr(K^{-1}M).
\end{equation}

Let $\dd\sigma$ denote the unnormalised surface measure on $\Sph^{N-1}$. For $\theta\in\Sph^{N-1}$, define the radial section
\begin{equation}\label{eq:radial-section}
E_\theta=\{r>0:r\theta\in\Omega\}
\end{equation}
and its radial Gaussian measure
\begin{equation}\label{eq:Y-theta}
Y(\theta)=\nu(E_\theta).
\end{equation}
For every $\theta$, one has $0\leq Y(\theta)\leq Y_*$; the value $Y(\theta)=Y_*$ is allowed by the endpoint convention introduced in Section~\ref{sec:raywise}. Polar coordinates and $\gammaN(\Omega)=\gammaN(B_R)$ give
\begin{equation}\label{eq:volume-constraint}
\int_{\Sph^{N-1}}\bigl(Y(\theta)-Y_R\bigr)\,\dd\sigma(\theta)=0.
\end{equation}
Define the angular defect matrix
\begin{equation}\label{eq:Z-definition}
Z=\int_{\Sph^{N-1}}
\bigl(Y(\theta)-Y_R\bigr)
\theta\theta^{\mathsf T}\,\dd\sigma(\theta).
\end{equation}
By \eqref{eq:volume-constraint},
\begin{equation}\label{eq:Z-trace}
\tr Z=0.
\end{equation}

For almost every $\theta\in\Sph^{N-1}$, set
\begin{equation}\label{eq:raywise-energies}
\begin{aligned}
\widetilde A_\theta&=\int_{E_\theta}G(r)^2\,\dd\nu(r),
&\widetilde Q_\theta&=\int_{E_\theta}G'(r)^2\,\dd\nu(r),\\
\widetilde H_\theta&=\int_{E_\theta}\frac{G(r)^2}{r^2}\,\dd\nu(r).
\end{aligned}
\end{equation}
In polar coordinates, the mass matrix is
\begin{equation}\label{eq:M-polar}
M=\int_{\Sph^{N-1}}
\widetilde A_\theta\,\theta\theta^{\mathsf T}\,\dd\sigma(\theta).
\end{equation}
Using \eqref{eq:ray-mass-general} and
\begin{equation}\label{eq:spherical-average}
\int_{\Sph^{N-1}}\theta\theta^{\mathsf T}\,\dd\sigma(\theta)
=\frac{|\Sph^{N-1}|}{N}\Id,
\end{equation}
we obtain
\begin{equation}\label{eq:M-bound}
M\succeq a\Id+cZ,
\qquad
a=\frac{|\Sph^{N-1}|}{N}A_R,
\qquad
c=G(R)^2.
\end{equation}

For the stiffness matrix, the vector form of \eqref{eq:gradient-trial} gives
\begin{equation}\label{eq:K-polar}
K=\int_{\Sph^{N-1}}
\left[
\widetilde Q_\theta\,\theta\theta^{\mathsf T}
+\widetilde H_\theta(\Id-\theta\theta^{\mathsf T})
\right]\dd\sigma(\theta).
\end{equation}
Since both $\theta\theta^{\mathsf T}$ and $\Id-\theta\theta^{\mathsf T}$ are positive semidefinite, the scalar estimates \eqref{eq:ray-angular-general}--\eqref{eq:ray-radial-general} imply
\begin{align*}
K
&\preceq Q_R\int_{\Sph^{N-1}}\theta\theta^{\mathsf T}\,\dd\sigma
+H_R\int_{\Sph^{N-1}}(\Id-\theta\theta^{\mathsf T})\,\dd\sigma\\
&\quad+\frac{G(R)^2}{R^2}
\int_{\Sph^{N-1}}
\bigl(Y(\theta)-Y_R\bigr)
(\Id-\theta\theta^{\mathsf T})\,\dd\sigma.
\end{align*}
The scalar part of the last integral vanishes by \eqref{eq:volume-constraint}, and therefore
\[
\int_{\Sph^{N-1}}
\bigl(Y(\theta)-Y_R\bigr)
(\Id-\theta\theta^{\mathsf T})\,\dd\sigma=-Z.
\]
Using \eqref{eq:spherical-average} and the energy identity \eqref{eq:energy-identity}, we conclude that
\begin{equation}\label{eq:K-bound}
K\preceq\lambda a\Id-dZ,
\qquad
d=\frac{G(R)^2}{R^2}>0.
\end{equation}

All the hypotheses of Lemma~\ref{lem:matrix-convexity} are satisfied. Hence
\[
\tr(K^{-1}M)\geq\frac{N}{\lambda}.
\]
Together with \eqref{eq:first-reduction} and $\lambda=\mu_1(B_R)$, this proves \eqref{eq:main}.

We now discuss equality. If $\Omega=B_R$, then
\[
\mu_1(B_R)=\cdots=\mu_N(B_R)=\lambda,
\]
so equality is immediate. Conversely, assume equality in \eqref{eq:main}. Then equality holds throughout
\[
\sum_{k=1}^{N}\frac{1}{\mu_k(\Omega)}
\geq\tr(K^{-1}M)
\geq\frac{N}{\lambda}.
\]
Lemma~\ref{lem:matrix-convexity} gives
\begin{equation}\label{eq:equality-matrices}
Z=0,
\qquad
M=a\Id,
\qquad
K=\lambda a\Id.
\end{equation}
In particular, equality holds in the matrix upper bound \eqref{eq:K-bound}.

For almost every $\theta\in\Sph^{N-1}$, set
\[
L_\theta=H_R+\frac{G(R)^2}{R^2}\bigl(Y(\theta)-Y_R\bigr).
\]
The difference between the right-hand side of \eqref{eq:K-bound} and $K$ can be written as
\begin{equation}\label{eq:K-gap}
\int_{\Sph^{N-1}}
\left[
(Q_R-\widetilde Q_\theta)\theta\theta^{\mathsf T}
+(L_\theta-\widetilde H_\theta)
(\Id-\theta\theta^{\mathsf T})
\right]\dd\sigma(\theta).
\end{equation}
The two scalar coefficients in \eqref{eq:K-gap} are nonnegative by Lemma~\ref{lem:raywise-estimates}. Since the matrix in \eqref{eq:K-gap} is zero, taking its trace gives
\[
0=\int_{\Sph^{N-1}}
\left[(Q_R-\widetilde Q_\theta)
+(N-1)(L_\theta-\widetilde H_\theta)\right]
\dd\sigma(\theta).
\]
The integrand is nonnegative; therefore both terms vanish for almost every $\theta$. In particular,
\begin{equation}\label{eq:angular-equality}
\widetilde H_\theta=L_\theta
\qquad\text{for almost every }\theta.
\end{equation}
For each such direction, the bathtub principle and \eqref{eq:H-tangent} give
\[
\widetilde H_\theta
\leq H(Y(\theta))
\leq L_\theta.
\]
By \eqref{eq:angular-equality}, equality holds in both inequalities. Since $H$ is strictly concave, equality in its tangent inequality at $Y_R$ is possible only when
\begin{equation}\label{eq:Y-equality}
Y(\theta)=Y_R
\qquad\text{for almost every }\theta.
\end{equation}
The equality case in Lemma~\ref{lem:bathtub}, applied to the strictly decreasing function $G^2/r^2$, then yields
\begin{equation}\label{eq:ray-equality}
E_\theta=(0,R)
\quad\text{up to a $\nu$-null set, for almost every }\theta.
\end{equation}
Consequently,
\begin{align*}
\gammaN(\Omega\mathbin\triangle B_R)
&=\int_{\Sph^{N-1}}
\nu\bigl(E_\theta\mathbin\triangle(0,R)\bigr)
\,\dd\sigma(\theta)=0.
\end{align*}
Since the Gaussian density is strictly positive, it also follows that
\begin{equation}\label{eq:lebesgue-null}
|\Omega\mathbin\triangle B_R|=0.
\end{equation}

We finally upgrade \eqref{eq:lebesgue-null} to equality of the open sets. First, $\Omega\subset B_R$. Indeed, if $x\in\Omega\setminus B_R$ and $|x|>R$, a sufficiently small ball around $x$ is contained in $\Omega\setminus B_R$, contradicting \eqref{eq:lebesgue-null}. If $|x|=R$, openness of $\Omega$ gives a ball around $x$ whose intersection with $\R^N\setminus B_R$ has positive measure, leading to the same contradiction.

Conversely, suppose $x\in B_R\setminus\Omega$. If $x\notin\overline\Omega$, then a sufficiently small ball around $x$ is contained in $B_R\setminus\Omega$, again contradicting \eqref{eq:lebesgue-null}. Hence $x\in\partial\Omega$. A Lipschitz domain satisfies an exterior cone condition, and hence an exterior density property, at every boundary point. Thus, for all sufficiently small $r>0$,
\[
|B_r(x)\setminus\Omega|\geq c_x r^N
\]
for some $c_x>0$. Choosing $r<\operatorname{dist}(x,\partial B_R)$ gives a subset of $B_R\setminus\Omega$ of positive measure, contradicting \eqref{eq:lebesgue-null}. Therefore $B_R\subset\Omega$, and $\Omega=B_R$.
\end{proof}

\section{Concluding remarks}

\begin{remark}
The symmetry assumption $\Omega=-\Omega$ is used to guarantee the simultaneous admissibility conditions
\[
\int_\Omega G(|x|)\frac{x_i}{|x|}\,\dd\gammaN=0,
\qquad i=1,\ldots,N.
\]
Clearly, unlike the Euclidean problem, the Gaussian problem is not translation invariant. 
\end{remark}

\begin{remark}
Inequality \eqref{eq:main} is equivalently an upper bound for the harmonic mean of $\mu_1(\Omega),\ldots,\mu_N(\Omega)$. Since
\[
\sum_{k=1}^{N}\frac{1}{\mu_k(\Omega)}
\leq\frac{N}{\mu_1(\Omega)},
\]
Theorem~\ref{thm:main} contains the Gaussian Szeg\H{o}--Weinberger inequality as an immediate consequence.
\end{remark}

\begin{remark}
The unrestricted Gaussian optimisation problem is not expected to select half-spaces. As shown in \cite[Remark~4.3]{ChiacchioDiBlasio}, every half-space has $\mu_1=1$, while non-symmetric bounded domains with $\mu_1>1$ exist. For the standard Gaussian density, however, the optimiser among all non-symmetric domains is still unknown. By contrast, for convex domains the lower-bound equality case is known: the planar strip-contained result of \cite{BrandoliniChiacchioKrejcirikTrombetti} was extended in \cite{BeckJerison} to all dimensions and without any strip-containment assumption, with equality precisely for convex domains that split off a line. 
\end{remark}

\appendix

\section{Rectangular boxes and the role of centring}\label{sec:boxes}

The result in this appendix is not needed in the proof of Theorem~\ref{thm:main}, but it clarifies the meaning of the symmetry assumption. The Gaussian measure has a product structure, and rectangular boxes form a non-symmetric class in which the first nonzero Neumann eigenvalue can be computed by separation of variables. The one-dimensional theorems in \cite{ChiacchioDiBlasio,BrockChiacchioDiBlasio} show that, at fixed Gaussian measure, every displacement of a factor away from the origin is spectrally unfavourable. Tensorization also forces the Gaussian masses of the factors to be balanced. As a consequence, the centred cube is the unique optimal box.

This observation should be interpreted with some care. It does not prove that the centred ball fails to maximise $\mu_1$ among all non-symmetric domains for the standard Gaussian density; that unrestricted problem remains open. It does show, however, that centring at the origin is an active part of the spectral optimisation rather than a harmless normalisation. It corroborates the need for a centring or symmetry hypothesis in general radial-weight Szeg\H{o}--Weinberger type results. The argument below uses the product structure of Gaussian measure rather than the radial matrix comparison employed in the main theorem.

For $s\in(0,1)$, let $a_s>0$ be determined by
\[
\gamma_1((-a_s,a_s))=s,
\]
and set
\[
\mathfrak M(s)=\mu_1((-a_s,a_s)).
\]
By \cite[Theorem~3.1]{ChiacchioDiBlasio}, if $I\subset\R$ is an interval with $\gamma_1(I)=s$, then
\begin{equation}\label{eq:one-dimensional-centering}
\mu_1(I)\leq\mathfrak M(s),
\end{equation}
with equality if and only if $I=(-a_s,a_s)$.

The same centring result, together with the strict monotonicity of the eigenvalue as the interval slides away from the origin while its weighted length is kept fixed, is recovered in a substantially more general form in \cite[Theorem~1.1]{BrockChiacchioDiBlasio}. More precisely, Brock, the author and Di Blasio consider the weighted Neumann problem
\[
-(q u')'=\mu q u\quad\text{in }(a,b),
\qquad u'(a)=u'(b)=0,
\]
where $q\in C^2(\R)$ is positive and even. If $q$ is nonincreasing on $(0,\infty)$, then, among all intervals with prescribed $q$-measure, the centred interval is the unique maximiser of the first nontrivial eigenvalue, provided that $q$ is not constant on the interval. Taking
\[
q(t)=e^{-t^2/2}
\]
recovers exactly \eqref{eq:one-dimensional-centering} and the full Gaussian sliding monotonicity theorem.

The proof in \cite{BrockChiacchioDiBlasio} is more general  than the original Gaussian argument. Thus \eqref{eq:one-dimensional-centering} may equivalently be invoked as the Gaussian special case of \cite[Theorem~1.1]{BrockChiacchioDiBlasio}.

Moreover, $\mathfrak M$ is strictly decreasing as a function of $s$. Indeed, if $\lambda_1^D(J)$ denotes the first Dirichlet eigenvalue of $-\frac{\dd^2}{\dd x^2}+x\frac{\dd}{\dd x}$ on an interval $J$, differentiating a first nonconstant Neumann eigenfunction gives
\[
\mathfrak M(s)=1+\lambda_1^D((-a_s,a_s));
\]
see \cite[(3.9)]{ChiacchioDiBlasio}. Since $s\mapsto a_s$ is strictly increasing, strict Dirichlet domain monotonicity shows that $\mathfrak M$ is strictly decreasing.

\begin{proposition}\label{prop:boxes}
Let
\[
Q=O(I_1\times\cdots\times I_N)\subset\R^N,
\qquad O\in O(N),
\]
be a rectangular box, and let $L=\gammaN(Q)$. Choose $a>0$ so that
\[
\gamma_1((-a,a))=L^{1/N}.
\]
Then
\begin{equation}\label{eq:box-inequality}
\mu_1(Q)\leq\mu_1((-a,a)^N).
\end{equation}
Equality holds if and only if, up to an orthogonal transformation,
\[
Q=(-a,a)^N.
\]
\end{proposition}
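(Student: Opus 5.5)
Since the Gaussian measure and the Ornstein--Uhlenbeck operator are invariant under orthogonal transformations, we may assume $O=\Id$, so $Q=I_1\times\cdots\times I_N$. Set $s_j=\gamma_1(I_j)\in(0,1]$. We have $\prod_j s_j=L$, and $L<1$ forces at least one factor to be a proper interval. By separation of variables, the Neumann spectrum of $Q$ consists of the sums of one-dimensional Neumann eigenvalues of the factors. Every one-dimensional spectrum contains $0$ with constant eigenfunctions, so
\[
\mu_1(Q)=\min_{1\le j\le N}\mu_1(I_j),
\]
with the convention $\mu_1(\R)=1$ for an unbounded full factor. Unbounded factors such as half-lines or the whole line are allowed, and compactness is inherited from the factors. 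I would justify the tensorization by completeness of products of one-dimensional eigenfunction bases in $L^2(\gammaN)$.

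Next, apply \eqref{eq:one-dimensional-centering} to each proper factor: $\mu_1(I_j)\le\mathfrak M(s_j)$, with equality only for $I_j$ centred. A factor with $s_j=1$ is $\R$, with $\mu_1=1$. To compare this with $\mathfrak M$, I would use that $\mathfrak M(s)=1+\lambda_1^D((-a_s,a_s))>1$ for every $s<1$. Hence
\[
\mu_1(Q)\le\min_j\mathfrak M(s_j),
\]
where $\mathfrak M(1):=1$ is still the smallest value, so monotonicity is preserved.

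Since $\mathfrak M$ is strictly decreasing, $\min_j\mathfrak M(s_j)=\mathfrak M(\max_j s_j)$. The constraint $\prod_j s_j=L$ gives $\max_j s_j\ge L^{1/N}$, with equality if and only if all $s_j=L^{1/N}$. Therefore
\[
\mu_1(Q)\le\mathfrak M(L^{1/N})=\mu_1((-a,a)^N),
\]
where the last equality is the tensorization formula applied to the cube.

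For the equality case, equality forces $\mathfrak M(\max_j s_j)=\mathfrak M(L^{1/N})$. Strict monotonicity then gives $s_j=L^{1/N}<1$ for all $j$. Equality in $\mu_1(Q)=\min_j\mu_1(I_j)\le\min_j\mathfrak M(s_j)$ requires only that the minimizing index attain equality, not every index. Since all $\mathfrak M(s_j)$ coincide, we need $\min_j\mu_1(I_j)=\mathfrak M(L^{1/N})$ while each $\mu_1(I_j)\le\mathfrak M(L^{1/N})$. This forces $\mu_1(I_j)=\mathfrak M(s_j)$ for every $j$, and the one-dimensional rigidity then makes every $I_j=(-a,a)$.

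I expect the main obstacle to be the careful justification of the tensorization identity $\mu_1(Q)=\min_j\mu_1(I_j)$ when some factors are unbounded: half-lines, or copies of $\R$ if $Q$ is a slab. This requires checking discreteness of the spectrum and completeness of the product eigenbasis in the weighted space. The endpoint convention $\mathfrak M(1)=1$ also needs the one-dimensional half-line case ($\mu_1=1$) to be covered by \eqref{eq:one-dimensional-centering}. I would first handle this via the Hermite basis on $\R$ and the spectral theorem for tensor products of self-adjoint operators with compact resolvent.
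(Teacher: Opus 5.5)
Your proposal is correct and follows the paper's proof step by step. You reduce to $O=\Id$, use $\mu_1(Q)=\min_j\mu_1(I_j)$, bound each factor by $\mathfrak M(s_j)$, and combine $\max_j s_j\ge L^{1/N}$ with the strict monotonicity of $\mathfrak M$; in the equality case, the coinciding upper bounds force every factor to attain one-dimensional equality. Your extra bookkeeping for a factor equal to $\R$ (setting $\mathfrak M(1)=1=\mu_1(\R)$, consistent since $\mathfrak M(s)=1+\lambda_1^D((-a_s,a_s))>1$ for $s<1$) and your explicit equality step make precise points that the paper leaves implicit.
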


\begin{proof}
Rotational invariance allows us to take $O=\Id$. Set
\[
s_j=\gamma_1(I_j),
\qquad j=1,\ldots,N.
\]
The product structure gives
\[
\prod_{j=1}^{N}s_j=L.
\]
The Ornstein--Uhlenbeck operator separates on Cartesian products, so
\[
\mu_1(Q)=\min_{1\leq j\leq N}\mu_1(I_j).
\]
Using \eqref{eq:one-dimensional-centering},
\[
\mu_1(Q)
\leq\min_{1\leq j\leq N}\mathfrak M(s_j)
=\mathfrak M\!\left(\max_{1\leq j\leq N}s_j\right).
\]
Since $\max_j s_j\geq L^{1/N}$ and $\mathfrak M$ is strictly decreasing,
\[
\mu_1(Q)\leq\mathfrak M(L^{1/N})=\mu_1((-a,a)^N),
\]
which proves \eqref{eq:box-inequality}.

If equality holds, strict monotonicity forces $\max_j s_j=L^{1/N}$. Together with $\prod_j s_j=L$, this gives $s_j=L^{1/N}$ for every $j$. Equality in the minimum then requires equality in \eqref{eq:one-dimensional-centering} for each factor, so all intervals are centred. The converse is immediate.
\end{proof}

\end{document}